\documentclass[12pt]{amsart}

\textwidth 16.6 true cm

\oddsidemargin=0pt

\evensidemargin=0pt

\usepackage{epsfig}
\usepackage{mathtools}
\usepackage{amsmath}
\usepackage{amssymb}
\usepackage{amsfonts}
\usepackage{MnSymbol}

%\usepackage{fancyheadings}
%\pagestyle{fancy}
%\lhead{} \chead{\scriptsize \rightmark} \rhead{}
%\renewcommand{\headrulewidth}{0pt}

\usepackage{amscd}
\usepackage{graphicx}
\usepackage{color}
\usepackage{verbatim}
\usepackage{bbm}
\usepackage{upgreek}
\NeedsTeXFormat{LaTeX2e}
\RequirePackage{textcmds}\relax
\ProvideTextCommandDefault{\cprime}{\tprime}
\newtheorem{theorem}{Theorem}[section]
\newtheorem{lemma}[theorem]{Lemma}
\newtheorem{prop}[theorem]{Proposition}

\theoremstyle{definition}
\newtheorem{definition}[theorem]{Definition}

\newtheorem{exam}[theorem]{Example}

\newenvironment{customthm}[1]
  {\innercustomthm}
  {\endinnercustomthm}

\newtheorem{remark}[theorem]{Remark}

\numberwithin{equation}{section}

\newcommand{\CU}{\mathcal{U}}

\newcommand{\CW}{\mathcal{W}}

\newcommand{\GC}{\mathfrak{C}}
\newcommand{\GP}{\mathfrak{P}}
\newcommand{\GF}{\mathfrak{F}}

\def \N {\mathbb N}

\def \Z {\mathbb Z}
\def \R {\mathcal R}

\def \F {\mathcal F}
\def \H {\mathcal H}

\def \Q {\mathcal Q}

\def \P {\mathcal P}
\def \M {\mathcal M}

\def \htop {h_{\mathsf{top}}}

\def \sq {sequence}

\def \tl {topological}
\def \im {invariant measure}
\def \inv {invariant}

\def \diam {\mathsf{diam}}
\def \htop{h_{\mathsf{top}}}
\def \Hom {\mathsf{Hom}}
\def \id {\mathsf{Id}}

\def \usc {upper semicontinuous}

\def \ens {entropy structure}
\def \ahe {asympto\-ti\-cally $h$-ex\-pan\-sive}
\def \zd {zero-dimen\-sio\-nal}

\def \eps {\varepsilon}

\def \N {\mathbb N}

\def \Z {\mathbb Z}
\def \R {\mathcal R}

\def \F {\mathcal F}
\def \H {\mathcal H}

\def \Q {\mathcal Q}

\def \CU {\mathcal U}
\def \P {\mathcal P}
\def \M {\mathcal M}

\def \htop {h_{\mathsf{top}}}

\def \sq {sequence}

\def \tl {topological}
\def \im {invariant measure}
\def \inv {invariant}

\def \htop{h_{\mathsf{top}}}
\def \Hom {\mathsf{Hom}}
\def \id {\mathsf{Id}}

\def \usc {upper semicontinuous}

\def \ens {entropy structure}

\makeindex

\begin{document}

\title[Tail variational principle and asymptotic $h$-expansiveness]{Tail variational principle and asymptotic $h$-expansiveness for amenable group actions}

\author{Tomasz Downarowicz and Guohua Zhang}

\address{\vskip 2pt \hskip -12pt Tomasz Downarowicz}

\address{\hskip -12pt Faculty of Pure and Applied Mathematics, Wroc\l aw University of Science and Technology, Wybrze\.ze Wyspia\'nskiego 27, 50-370 Wroc\l aw, Poland}

\email{downar@pwr.edu.pl}

\address{\vskip 2pt \hskip -12pt Guohua Zhang}

\address{\hskip -12pt School of Mathematical Sciences and Shanghai Center for Mathematical Sciences, Fudan University, Shanghai 200433, China}

\email{chiaths.zhang@gmail.com}

\parindent=10pt
\begin{abstract}
In this paper we prove the tail variational principle for actions of countable amenable groups. This allows us to extend some characterizations of asymptotic $h$-expan\-si\-ve\-ness from $\mathbb{Z}$-actions to actions of countable amenable groups.
\end{abstract}

\maketitle

\setcounter{tocdepth}{2}

\section{Motivation}

The initial goal of this note was to give a characterization of \ahe\ dynamical systems with actions of countable amenable groups in terms of the existence of a principal symbolic extension, just as it holds for actions of the integers (see \cite{D2005}). It seemed that all needed ingredients are already available. The theory of symbolic extensions for countable amenable group actions as well as the notion of entropy structure have been recently developed in \cite{DownarowiczZhang} (for a brief exposition on the symbolic extension theory in topological dynamics see also a recent survey \cite{DZ-survey} and the references therein). The item linking these two topics, the tail variational principle for such actions (analogous to that known for $\Z$-actions from \cite{D2005}) was given in \cite{Zhou}. But we found that the proof of the tail variational principle in \cite{Zhou} has a gap (the details will be explained in Section \ref{8}). This forced us to deliver our own proof, and due to some unexpected subtleties, this proof grew to become the main part of this paper. We resolve the subtleties by providing a \tl\ analog of the Pinsker formula, known for measure-preserving actions. Other than that, our proof relies on two already existing results on the interplay between \tl\ and measure-theoretic dynamics of countable amenable group actions: the variational principle for topological relative entropy \cite[Theorem 13.3]{DZ} (another proof can be found in \cite[Theorem 5.1]{Yan}) and a characterization of the \tl\ tail entropy in terms of selfjoinings of the action \cite[Theorem 3.1]{ZhouZhangChen}. Eventually, we were able to achieve our initial goal and characterize \ahe\ $G$-actions.

\section{Introduction and statements of the main results}\label{sI}

Throughout this paper, we focus on countable amenable groups $G$, where by ``countable'' we always mean ``infinite countable''. Since the topology on $G$ plays no role, we can as well assume that it is discrete. By $\GF_G$ we denote the collection of all finite nonempty subsets of $G$. Amenability of $G$ is equivalent to the existence of a \emph{F\o lner \sq}, i.e.\  a \sq\ $(F_n)_{n\in \mathbb{N}}\subset\GF_G$ such that $\lim\limits_{n\to\infty}\frac{|F_n\cap gF_n|}{|F_n|}=1$, for any $g\in G$, where  $|\cdot|$ denotes the cardinality~of~a~set. 

Let $X$ be a compact metric space and let $\Hom(X)$ denote the group of all homeomorphisms $\phi:X\to X$. By an \emph{action} (more precisely, \emph{topological action}) of $G$ on $X$ we mean a homomorphism from $G$ into $\Hom(X)$, i.e.\ an assignment $g\mapsto \phi_g$ such that $\phi_{gg'} = \phi_g\circ\phi_{g'}$ for every $g,g'\in G$. It follows automatically that $\phi_e=\id$ (the unit $e$ of $G$ acts by identity) and $\phi_{g^{-1}}=(\phi_g)^{-1}$ for every $g\in G$. Such an action will be denoted by $(X, G)$. Although the group may act on the same space in many different ways, we will usually consider just one such action, hence this notation should not lead to a confusion. An action $(X,G)$ is also called a {$G$-action} or, when the group is understood, a \emph{topological dynamical system} or just a \emph{system}. From now on, to reduce the multitude of symbols used in this paper, we will write $g(x)$ in place of $\phi_g(x)$. The same applies to subsets $A\subset X$: $g(A)$ will replace $\phi_g(A)=\{\phi_g(x):x\in A\}$. A Borel measurable set $ A\subset X$ is called \emph{\inv} if $g(A)=A$ for every $g\in G$. For a Borel probability measure $\mu$ on $X$ and $g\in G$, by $g(\mu)$ we will denote the measure defined by $g(\mu)(A)=\mu(g^{-1}(A))$ for all Borel subsets $A\subset X$. A measure $\mu$ is \emph{invariant} if $g(\mu) = \mu$ for all $g\in G$. The collection of invariant Borel probability measures will be denoted by $\M(X,G)$. By amenability, this collection is nonempty, and for general reasons it is convex and weakly-star compact.

The main result of this paper is Theorem \ref{tail} referred to as the \emph{tail variational principle}, which extends the tail variational principle known for $\mathbb{Z}$-actions (see \cite{D2005}). The meaning of $h^*(X,G)$, $u_1$ and $\theta_k$ is a straightforward adaptation of the analogous terms for $\Z$-actions (see \cite{D2005}) and their definitions will be provided in the next section.
For now, it suffices to say that $h^*(X,G)$ is the \emph{\tl\ tail entropy} introduced (for $\Z$-actions) by M.\ Misiurewicz under the name of \emph{\tl\ conditional entropy} (see \cite{Mi1}), while $u_1$ and $\theta_k$ are special functions on $\M(X,G)$ associated to the so-called \emph{entropy structure} of the action, introduced (again, for $\Z$-actions) by M.\ Boyle and T.\ Downarowicz in \cite{BD} (see also \cite{D2005} for a more detailed exposition on entropy structures).

\begin{theorem}\label{tail}
Let $(X,G)$ be a topological action of a countable amenable group, with finite entropy. Then
$$
h^* (X,G)= \max \{u_1 (\mu): \mu\in \M(X,G)\}= \lim_{k\rightarrow \infty}\ \sup\{\theta_k (\mu):\mu\in\M(X,G)\}.
$$
\end{theorem}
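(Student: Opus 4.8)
The plan is to establish the two claimed equalities by relating the topological tail entropy $h^*(X,G)$ to the asymptotic behavior of the entropy structure functions $\theta_k$, using the variational principle for topological relative entropy and the selfjoining characterization of tail entropy mentioned in the introduction. Concretely, I would split the proof into two inequalities chained through the middle quantity $\max\{u_1(\mu)\}$: first the identification of the supremum limit of $\theta_k$ with $h^*$, and then the identification of that quantity with the maximum of $u_1$.

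First I would recall the definitions (to be supplied in the next section) that $u_1$ is the first transfinite ``jump'' function in the entropy structure, essentially $u_1(\mu) = \limsup_{k} (\text{tail of the defect of upper semicontinuity of the }\theta_k)$, and that $\sup_\mu \theta_k(\mu)$ measures a finite-scale conditional entropy. The cleaner direction is likely $\lim_k \sup_\mu \theta_k(\mu) = h^*(X,G)$. Here I would use the variational principle for topological relative entropy from \cite[Theorem 13.3]{DZ}: for each $k$ the topological conditional entropy at scale $k$ equals the supremum over invariant measures of the corresponding measure-theoretic conditional entropy, which is exactly $\sup_\mu \theta_k(\mu)$. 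Passing to the limit in $k$ and using that the F\o lner-averaged tail entropy is the limit of these finite-scale topological quantities should yield this equality directly; the monotonicity of $\theta_k$ in $k$ (decreasing, by construction of the entropy structure) guarantees the limit exists and can be interchanged with the supremum only after care is taken about the order of $\sup$ and $\lim$.

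The harder equality is $\max_\mu u_1(\mu) = \lim_k \sup_\mu \theta_k(\mu)$, where I expect the main obstacle to lie. The quantity $u_1(\mu)$ is a pointwise limit (in $k$) of the defect of upper semicontinuity of the increments of the entropy structure, so passing from a pointwise statement about each fixed $\mu$ to a statement about the global supremum requires an exchange of two limits (the $k\to\infty$ defining $u_1$, and the $\sup$ over the weak-$*$ compact set $\M(X,G)$). This is exactly the type of exchange that fails in general for merely upper semicontinuous functions, and I anticipate this is where the gap in \cite{Zhou} resides. To control it I would invoke the \emph{topological analog of the Pinsker formula} that the authors announce in the Motivation section: this should let me rewrite the defect $u_1$ in terms of a genuine conditional entropy with respect to the Pinsker factor, turning the problematic $\limsup$ of defects into an honest limit of conditional entropies for which the selfjoining characterization \cite[Theorem 3.1]{ZhouZhangChen} provides uniform control over $\mu$. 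The achievement of the maximum (rather than mere supremum) in $\max_\mu u_1(\mu)$ would then follow from upper semicontinuity of $u_1$ together with weak-$*$ compactness of $\M(X,G)$.

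The finite entropy hypothesis enters to guarantee that all the quantities in play are finite and that the entropy structure is well-defined with the expected convergence properties, so I would verify at the outset that $\htop(X,G) < \infty$ makes each $\theta_k$ bounded and each $u_1(\mu)$ finite. In summary, the skeleton is: (i) set up the entropy structure and the defect functions $u_1,\theta_k$; (ii) prove $\lim_k \sup_\mu \theta_k = h^*$ via the relative variational principle; (iii) prove $\max_\mu u_1 = \lim_k \sup_\mu \theta_k$ by establishing and applying the topological Pinsker formula to legitimize the limit exchange, with the selfjoining characterization supplying uniformity and compactness supplying attainment of the maximum. The crux, and the step most likely to demand genuinely new work, is the limit-exchange in (iii), which is precisely what the topological Pinsker formula is designed to resolve.
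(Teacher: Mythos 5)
Your proposal inverts where the difficulty lies, and this misplacement turns into a genuine gap. The equality $\max_\mu u_1(\mu)=\lim_k\sup_\mu\theta_k(\mu)$, which you single out as ``the harder equality'' and where you plan to spend the topological Pinsker formula and the selfjoining characterization, is in fact the elementary one: the functions $\tilde{\theta}_k$ are upper semicontinuous and nonincreasing in $k$, so exchanging $\lim_k$ with $\max_\mu$ is the standard swap for monotone sequences of u.s.c.\ functions on a compact set (see \cite[Proposition 2.4]{BD}), and once the maximum is inside, the u.s.c.\ envelopes can be dropped and the maximum replaced by a supremum. No new machinery enters here. Relatedly, you have misread what the ``topological Pinsker formula'' is: it has nothing to do with the Pinsker factor. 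It is the identity $h_G(\CU|\CW)=\bar h_G(\CU|\CW)$ (Theorem \ref{1007031233}), equating the conditional entropy, where the conditioning cover is $\CW^{F_n}$, with the relative entropy, where the conditioning cover is the invariant $\CW^G$; its tiling-based proof is the main technical work of the paper.

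The real difficulty sits exactly in the step you call ``cleaner'', namely $h^*(X,G)=\lim_k\sup_\mu\theta_k(\mu)$, and your plan for it reproduces the gap of \cite{Zhou}. You propose to apply the relative variational principle \cite[Theorem 13.3]{DZ} ``for each $k$'' directly to the topological conditional entropy at scale $k$. But that principle applies to the relative entropy of a topological factor map, i.e.\ to conditioning on an invariant object, whereas $h^*$ is defined through the quantities $H(\CU^{F_n}|\CW^{F_n})$, in which the conditioning cover moves with $F_n$; these are subadditive but not strongly subadditive (Example \ref{example}), so neither an infimum rule nor any known variational principle applies to them directly. Converting the conditional entropies $h_G(\P_m|\P_k)$ into relative entropies of the factor map between the symbolic factors generated by the clopen partitions is precisely the job of the topological Pinsker formula --- this is how Proposition \ref{201505172121} is proved --- and without it your step (ii) does not follow. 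Finally, your skeleton omits the reduction of the general case to zero-dimensional systems: the identification of $\theta_k$ with conditional entropies of clopen partitions makes sense only after passing to a principal zero-dimensional extension (via \cite[Theorem 3.2]{H}), and this reduction requires knowing that principal extensions preserve $h^*$ (Proposition \ref{principal}); that is where the selfjoining characterization \cite[Theorem 3.1]{ZhouZhangChen} is actually used, not as ``uniform control over $\mu$'' in the $u_1$ equality.
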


The second equality is elementary. Roughly speaking, it is just swapping the limit of a decreasing \sq\ of functions with the supremum over the domain. In general, such a swapping leads to an inequality, but here we have equality due to upper semicontinuity of the involved functions (see e.g.\ \cite[Proposition 2.4 ]{BD}). The nontrivial part is, of course, the first equality. It is essential that $h^*(X,G)$ is defined exclusively in terms of \tl\ dynamics, while both expressions on the right evidently refer to \im s. So the above variational principle (just like the majority of variantional principles) describes the interplay between \tl\ and measure-theoretic dynamics. For actions of $\Z$ this equality was first proved in \cite{D2005} and an alternative proof was given in \cite{Burguet09}. The proofs do not pass directly to actions of amenable groups, as both of them use some machinery typical for $\Z$-actions (such as Krengel's subadditive ergodic theorem, or Lindenstrauss' theorem on small boundary property of certain systems).

A well-oriented reader will notice that the tail variational principle already exists in the literature, in \cite{Zhou}. But we found that the proof in that paper contains a gap. It seems that Y.\ Zhou attempted to prove Shearer's inequality using plain subadditivity (which is not possible; see Section \ref{8} for a more detailed explanation). There is a subtlety in the notion of conditional entropy: there are two possible versions of how this notion can be defined; in one case the conditioning object is invariant, in the other it is not. It is thus important to distinguish between them, so we will call one of them ``conditional'', and the other ``relative''. The two are connected by the so-called Pinsker formula, which is known in the measure-theoretic case, while the \tl\ version needs to be created from scratch. And this is what we do in the first place. Our proof relies on ``amenable tools'' such as the tilings. 

\medskip
Following the terminology introduced by M.\ Misiurewicz for $\Z$-actions, we will call a system $(X,G)$ \emph{\ahe} if $h^*(X,G)=0$. Because of the (fairly obvious from the definition) inequality $h^*(X,G)\le\htop(X,G)$, any system with zero \tl\ entropy is \ahe. It is also relatively easy to see that any $G$-subshift with a finite alphabet is \ahe\ as well. But the class of \ahe\ systems is much richer. For example, it is known that an algebraic $\mathbb{Z}$-action\footnote{An algebraic action of a discrete group $\Gamma$ is a homomorphism from $\Gamma$ to the group of continuous automorphisms of a compact abelian group $X$.} has finite \tl\ entropy if and only if it is \ahe\ (which is due to Misiurewicz, for details see \cite[Section 7]{Mi1}). This result was extended to algebraic actions $(X,G)$ of countable amenable groups $G$ in \cite[Theorem 7.1]{ChungZhang}. J.~Buzzi provided another prominent class of \ahe\ $\Z$-actions, namely those generated by $C^\infty$ diffeomorphisms on compact smooth manifolds (see \cite{Bu}). If $G$ is a countable amenable group containing $\mathbb{Z}$ as a subgroup of infinite index, then any action $(X,G)$ by $C^1$ maps on a compact smooth manifold (here we allow the manifold to have different dimensions for different connected components, even including zero dimension) has zero topological entropy (see for example \cite[Lemma~5.7]{LT}), and so is trivially \ahe. 

\medskip

For $\Z$-actions there are several conditions equivalent to asymptotic $h$-expansiveness (see e.g.\ \cite{D2005}), one of which is the existence of a \emph{principal symbolic extension}, i.e.\ a \tl\ extension in form of a subshift with a finite alphabet, which preserves the entropy of all \im s. So, a natural question arises: is the same true for actions of countable amenable groups? The notion of a subshift, as well as that of a principal extension is in this context perfectly understandable, so the positive answer would shed a light, in particular, on algebraic actions with finite entropy (such a question was actually raised by Hanfeng Li~\cite{Li}).

As a consequence of Theorem \ref{tail} and the theory of symbolic extensions for amenable group actions developed in \cite{DownarowiczZhang}, we are able to provide a \emph{quasi-positive} answer to this question, with the same restrictions as apply to the entire theory of symbolic extensions for actions of amenable groups (explanations are provided right after the formulation). 

\begin{theorem} \label{main result}
Let $(X,G)$ be a \tl\ action of a countable amenable group. Then the following conditions are equivalent:
 \begin{enumerate}
 \item $(X,G)$ is \ahe.
 \item The entropy structure of $(X,G)$ converges uniformly to the entropy function.
% \item The entropy function is the minimal superenvelope of the entropy % structure.
 \item $(X, G)$ admits a principal quasi-symbolic extension.
 \item For any $\eps>0$ the action admits a quasi-symbolic extension with \tl\ relative 
 entropy at most $\eps$.
 \end{enumerate}
Furthermore, if $G$ is either residually finite or enjoys the comparison property then the quasi-symbolic extensions in the above statements can be replaced by symbolic extensions.
\end{theorem}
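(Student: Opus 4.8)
The plan is to deduce Theorem~\ref{main result} by feeding the tail variational principle (Theorem~\ref{tail}) into the theory of quasi-symbolic extensions for amenable group actions developed in \cite{DownarowiczZhang}, following the scheme known for $\Z$-actions from \cite{D2005}. The whole statement rests on the chain of equivalences $(1)\iff(2)\iff(3)\iff(4)$, which I would organize around the single pivot ``the entropy function $h$ is a \se\ of the \ens''. Once Theorem~\ref{tail} is in hand the argument is almost entirely formal, the one substantial external input being the realization half of the symbolic extension theorem of \cite{DownarowiczZhang}.

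First I would establish $(1)\iff(2)$ directly from Theorem~\ref{tail}. By definition $(X,G)$ is \ahe\ exactly when $h^*(X,G)=0$, and Theorem~\ref{tail} evaluates this as $h^*(X,G)=\max\{u_1(\mu):\mu\in\M(X,G)\}$. Since $u_1$ is a nonnegative function (being the limit of the nonnegative defect functions $\theta_k$), this maximum vanishes if and only if $u_1\equiv0$. By the definition of $u_1$ in the Boyle--Downarowicz theory (\cite{BD}, \cite{D2005}), the identity $u_1\equiv0$ is precisely the assertion that the \ens\ converges uniformly to $h$, which is condition~(2). Here one only has to note that the \ens\ underlying $u_1$ and $\theta_k$ in Theorem~\ref{tail} is, by construction, the same invariant that drives the extension theory, so the two occurrences of ``\ens'' refer to one and the same object.

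Next I would run $(2)\iff(3)\iff(4)$ through \cite{DownarowiczZhang}. In that theory the attainable quasi-symbolic extension entropy functions are exactly the \se s of the \ens, and the quasi-symbolic extension entropy $\hsex$ is their pointwise infimum, again a \se\ and hence the smallest one. A standard fact (\cite{BD}, \cite{D2005}) is that $h$ is itself a \se\ if and only if $u_1\equiv0$, that is if and only if~(2) holds; indeed $h$ being a \se\ forces $\hsex\le h$, and since $\hsex\ge h$ always, this is the same as $\hsex=h$. Assuming~(2), then, $h$ is a \se, so the realization part of \cite{DownarowiczZhang} furnishes a quasi-symbolic extension $\pi$ whose extension entropy function $h^\pi_{\ex}$ equals $h$, i.e.\ a principal one; this is~(3). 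The implication $(3)\Rightarrow(4)$ is immediate, since a principal extension has \tl\ relative entropy $0\le\eps$. Finally $(4)\Rightarrow(2)$ holds because an extension with relative entropy at most $\eps$ forces $\hsex\le h+\eps$; letting $\eps\to0$ and using $\hsex\ge h$ yields $\hsex=h$, which we have already identified with~(2).

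For the furthermore clause I would invoke the sharper output of \cite{DownarowiczZhang}: when $G$ is residually finite or has the comparison property, the realization step produces genuine subshifts over finite alphabets, so that every ``quasi-symbolic'' in (3)--(4) may be upgraded to ``symbolic''. I expect the main obstacle here to be bookkeeping rather than new mathematics. Two points require care: that the \ens\ (and thus the functions $u_1,\theta_k$) appearing in Theorem~\ref{tail} really is the invariant powering \cite{DownarowiczZhang}, and that the ``\tl\ relative entropy'' of~(4) is exactly $\sup_{\mu}\bigl(h^\pi_{\ex}(\mu)-h(\mu)\bigr)$, so that the passage to $\hsex=h$ is legitimate. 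The only genuinely non-formal ingredient, imported directly from \cite{DownarowiczZhang}, is that the smallest \se\ is actually attained by some extension; this realization theorem, together with the residual-finiteness/comparison dichotomy, is precisely what separates the amenable setting from the classical $\Z$-case.
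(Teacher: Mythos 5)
Your proposal is correct and follows essentially the same route as the paper: the equivalence $(1)\Leftrightarrow(2)$ comes from Theorem~\ref{tail} (uniform convergence of the entropy structure being the same as $\lim_k\sup_\mu\theta_k(\mu)=0$, equivalently $u_1\equiv 0$), while $(2)\Leftrightarrow(3)\Leftrightarrow(4)$ and the final clause about residually finite groups and the comparison property are precisely the content of the results of \cite{DownarowiczZhang} that the paper invokes. The only difference is one of packaging: the paper cites these as ready-made equivalences (notably \cite[Theorem 5.6]{DownarowiczZhang} for $(2)\Leftrightarrow(3)$), whereas you unpack them via the superenvelope/$\hsex$ machinery ($h$ being an affine superenvelope iff $u_1\equiv 0$, plus the realization theorem), which is exactly what sits inside those citations.
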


We explain, that by a \emph{\tl\ extension} of a system $(X,G)$ we mean another system $(Y,G)$ and continuous surjection $\pi: Y\rightarrow X$ (called the \emph{factor map}) which is equivariant, i.e.\ satisfies the condition $\pi(g(y))= g(\pi(y))$ for each $y\in Y$ and $g\in G$. An extension $(Y,S)$ is \emph{quasi-symbolic} if it is a \tl\ joining of a subshift with a specific zero-dimensional system depending only on the group $G$. This extra system is called a \emph{tiling system} and although it has \tl\ entropy zero (hence is trivially \ahe), it is in general not known whether it admits a symbolic extension at all (let alone principal). This open problem is equivalent to another: does every countable amenable group enjoy the so-called \emph{comparison property} (we refer the reader to \cite{DownarowiczZhang} for details). This equivalence explains the last relaxation in the theorem. It has been proved as \cite[Theorem 6.33]{DownarowiczZhang} that every subexponential group enjoys the comparison property (note that every subexponential group is amenable). Examples of subexponential groups are: Abelian, nilpotent and
virtually nilpotent groups (they have polynomial growth), and the Grigorchuk group, whose growth is subexponential but superpolynomial. In particular, our theorem implies that any algebraic action by any subexponential group $G$ has finite entropy if and only if it admits a principal symbolic extension. Similar relaxation for residually finite amenable groups follows by a different argument, which can also be found~in~\cite{DownarowiczZhang}.

\section{Preliminaries on entropy}

%\subsection{Tilings of countable amenable groups}

%The Ornstein-Weiss Lemma \cite{OW} about subadditivite functions plays a crucial role in the study of entropy theory for actions of countable amenable groups. The examples of applications can be found in \cite{DZ, HYZ1, RW, WZ, We}. The following version this lemma is taken from \cite[1.3.1]{Gronew}.

%Recall that a function $f: \mathfrak{F}_G\rightarrow \mathbb{R}$ (with $f (\emptyset)= 0$ by convention) is \emph{$G$-invariant} if $f (E g)= f (E)$ for all $E\in \mathfrak{F}_G$ and $g\in G$, and it is \emph{subadditive} if $f (E\cup F)\le f (E)+ f (F)$ for all $E, F\in \mathfrak{F}_G$.

%\begin{prop} \label{ow-prop-convergence}
%Let $f: \mathfrak{F}_G\rightarrow \mathbb{R}$ be a nonnegative, $G$-invariant, subadditive function. Then for any F\o lner sequence $\{F_n: n\in \mathbb{N}\}$ of $G$ the sequence $\left\{\frac{f (F_n)}{|F_n|}: n\in \mathbb{N}\right\}$ converges and the value of the limit is independent of the selection of $\{F_n: n\in \mathbb{N}\}$.
%\end{prop}

%The following global version of the Ornstein-Weiss quasitiling theorem was proved as \cite[Theorem 5.2]{DHZ}.

%\begin{theorem} \label{DHZ}
%There exists a F\o lner sequence $F_1, \cdots, F_{n_1}, F_{n_1+ 1}, \cdots, F_{n_2}, F_{n_2+ 1}, \cdots$ of $G$, containing the unit $e$, such that each $F_{n_{i+ 1}+ j}, i\in \mathbb{Z}_+, j= 1, \cdots, n_{i+2}$ can be written as a disjoint union of shifts of $F_{n_i+ 1}, \cdots, F_{n_{i+ 1}}$, where by convention $n_0= 0$.
%\end{theorem}

%\subsection{Topological and tail entropy for actions of countable amenable groups}\
Let $(X, G)$ be a topological action. We denote by $\mathcal {B}_X$ the collection of all Borel subsets of $X$. By a {\it cover} of $X$ we mean a family of Borel sets
whose union is $X$. A {\it partition} of $X$ is a cover whose elements are pairwise disjoint. Let us denote the set of all covers by $\GC_X$, the set of all finite open covers by $\GC^o_X$, the set of all finite closed covers by $\GC^c_X$, and the set of all finite partitions by $\GP_X$. If $X$ is totally disconnected, then $\GP_X\cap\GC^o_X\cap\GC^c_X$ is nonempty and its members are called \emph{clopen partitions}. Note that for any $\mathcal U\in\GC_X$ and $g\in G$, the family 
$$
g^{-1}(\mathcal U)=\{g^{-1}(U):U\in\mathcal U\}
$$ 
is also a cover, and, by continuity of the action, if $\mathcal U$ is either open, closed, or a partition, so is $g^{-1}(\mathcal U)$. Let $\mathcal U,\CW\in \GC_X$. The family
$$
\mathcal U\vee \CW= \{U\cap W: U\in \mathcal U, W\in \CW\}
$$
is a cover called the \emph{join} of $\mathcal U$ and $\CW$. Obviously, the families $\GP_X$, $\GC^o_X$ and $\GC^c_X$ are closed under the join operation. The definition of the join extends naturally to any finite or even countable collection of covers, however a countable join of finite covers is no longer finite. Note the obvious fact, that a countable join of closed covers is a closed cover (which can be uncountable). 
For $\CU\in \GC_X$ and $F\subset G$ ($F$ finite or countable), we set
$$
\CU^F= \bigvee_{g\in F} g^{- 1} \CU.
$$
By convention we also let $\CU^\emptyset$ be the trivial cover $\{X\}$.

If each element of a cover $\mathcal U$ is contained in some element of another cover $\CW$ then we say that $\mathcal U$ is \emph{finer than $\CW$} (and write $\mathcal U\succeq \CW$). 

For two covers $\CU,\CW\in \GC_X$ we let 
$$
N(\CU,\CW) = \max_{W\in\CW}\ \min\{|\CU'|:\CU'\subset\CU, W\subset \bigcup\CU'\}
$$
(the minimal integer $N$ such that every element of $\CW$ can be covered by $N$ elements of $\CU$), and we set the \emph{conditional counting entropy of $\CU$ given $\CW$} to be
$$
H(\CU|\CW)=\log N(\CU,\CW).
$$

\subsection{Topological notions}
We shall now recall the key notions of \tl\ entropy and \tl\ tail entropy of a \tl\ $G$-action. They are straightforward adaptations of the Adler--Conheim--McAndrew notion of \tl\ entropy and Misiurewicz' \emph{\tl\ conditional entropy} \cite{Mi1}, respectively (the term \emph{tail entropy} was introduced later). We remark that \tl\ tail entropy for actions of countable amenable groups has been addressed in \cite{ChungZhang, ZhouZhangChen} and also several equivalent definitions are given in \cite{Zhou}.

Let $\CU,\CW\in\GC^o_X$ and on $\GF_G$ consider the nonnegative function $F\mapsto H(\CU^F|\CW^F)$. It is crucial, that the set $F$ appears here in both ``exponents''. Due to this, it is relatively easy to check that this function is $G$-invariant and subadditive, i.e. 
$$
H(\CU^{F_1\cup F_2}|\CW^{F_1\cup F_2})\le H(\CU^{F_1}|\CW^{F_1})+ H(\CU^{F_2}|\CW^{F_2}),\ \ \ \ (F_1, F_2\in \mathfrak{F}_G)
$$ 
(for details see \cite[Lemma 5.2]{ChungZhang}). We define the \emph{topological conditional entropy of the cover $\CU$ given the cover $\CW$} as the following limit
$$
h_G(\CU| \CW) = \lim_{n\rightarrow \infty} \frac{1}{|F_n|} H(\CU^{F_n}|\CW^{F_n}),
$$
where $(F_n)_{n\in\N}$ is a F\o lner \sq\ in $G$. The Ornstein--Weiss Lemma (see e.g. \cite[1.3.1]{Gronew}) guarantees that the above limit exists and does not depend on the choice of the F\o lner sequence. Next, one defines a series of related notions:
\begin{align*}
&h_G(\CU)=h_G(\CU|\{X\}),\\
&h_G(X|\CW)= \sup_{\CU\in \GC_X^o} h_G(\CU| \CW),\\
&h_G(X) = \htop(X,G) = h_G(X|\{X\})=\sup_{\CU\in \GC_X^o} h_G(\CU),\\
&h^*(X,G)= \inf_{\CW\in \GC_X^o} h_G(X|\CW).
\end{align*}
They are called, respectively, \emph{\tl\ entropy of the cover $\CU$}, 
\emph{\tl\ conditional entropy (of the action) given the cover $\CW$},
\emph{\tl\ entropy (of the action)}, and \emph{\tl\ tail entropy (of the action)}.

Directly by the definitions, we have $h^*(X,G)\le\htop(X,G)$. It is also not hard to see that if $\htop(X,G)=\infty$ then also $h^*(X,G)=\infty$. Following \cite{Mi1}, we introduce another key notion of this paper. We remark that the adaptation to amenable group actions has already been addressed~in~\cite{ChungZhang}.
\begin{definition}
A \tl\ action $(X,G)$ is \emph{\ahe} if $h^*(X,G)=0$.
\end{definition}

\smallskip

We now review a family of notions very similar to the previous ones, in which we replace the conditioning cover $\CW^{F_n}$ by the invariant (and usually uncountable) cover $\CW^G$. In order to distinguish the two approaches, we will now use the term ``relative'' rather than ``conditional''. 
\begin{definition}
Let $(F_n)_{n\in \mathbb{N}}$ be a F\o lner sequence of $G$ and let $\CU\in\GC_X^o$ while $\CW$ is any cover of $X$. The \emph{\tl\ relative entropy} of the cover $\CU$ given the cover $\CW$ is the limit
$$
\bar h_G(\CU|\CW)= \lim_{n\rightarrow \infty}\frac{1}{|F_n|}H(\CU^{F_n}|\CW^G).
$$
\end{definition}
The function $F\mapsto H(\CU^F|\CW^G)$, is obviously subadditive and \inv\ on $\GF_G$. Thus, by the already mentioned Ornstein--Weiss Lemma, the limit in the definition exists and does not depend on the F\o lner \sq. 

A special case of the above relative entropy occurs when the cover $\CW$ is a closed \usc\ partition\footnote{A closed partition is \usc\ if and only if it defines a closed equivalence relation.}. Then $\CW^G$ defines a closed invariant equivalence relation in $X$. It is well known that such a relation determines a \tl\ factor of the system $(X,G)$, say $(Z,G)$. In this case we will denote $\bar h_G(\CU|\CW)$ by $\bar h_G(\CU|Z)$ and call it the \emph{relative entropy of the cover $\CU$ given the factor $(Z,G)$}. Conversely, given a \tl\ factor $(Z,G)$ of $(X,G)$ (via a \tl\ factor map $\pi:X\to Z$), the partition $\CW$ of $X$ into the fibers of $\pi$ is closed, \usc\ and invariant (i.e.\ $\CW^G=\CW$). In this case, we have
$$
\bar h_G(\CU|Z)=\bar h_G(\CU|\CW).
$$
Finally, given a \tl\ factor $(Z,G)$ of $(X,G)$ we define the \emph{\tl\ relative entropy of the system $(X,G)$ given the factor $(Z,G)$}, as
$$
\bar h_G(X|Z)=\sup_{\CU\in\mathfrak C^o_X} \bar h_G(\CU|Z).
$$

It is elementary to see that if $(X,G)$ and $(Y,G)$ are subshifts and $\CU$ and $\CW$ are the clopen ``zero-coordinate'' partitions (or ``one-symbol'' partitions), then 
$$
\bar h_G(X|Z)=\bar h_G(\CU|\CW).
$$

\subsection{Measure-theoretic notions}
We now pass to the review of measure-theoretic notions of entropy. We consider a measure-preserving $G$-action $(X,\Sigma_X,\mu,G)$, that is, $(X,\Sigma_X,\mu)$ is a probability space on which $G$ acts by measure-preserving automorphisms. For example, if $(X,G)$ is a \tl\ action and $\mu\in\M(X,G)$ then $(X,\mathcal B_X,\mu,G)$ is a measure-preserving $G$-action. Let $\P\in \GP_X$. The \emph{Shannon entropy} of $\P$ equals
$$
H_\mu(\P)=-\sum_{P\in\P}\mu(P)\log(\mu(P)).
$$
Shannon entropy satisfies \emph{strong subadditivity}, i.e.\
$$
H_\mu(\P^{F_1\cup F_2})\le H_\mu(\P^{F_1})+H_\mu(\P^{F_2})-H_\mu(\P^{F_1\cap F_2}),\ \ \ \ (F_1, F_2\in \mathfrak{F}_G).
$$
The \emph{dynamical entropy of $\P$ with respect to $\mu$} is defined as 
$$
h_\mu(\P,G) = \lim_n \frac1{|F_n|}H_\mu(\P^{F_n})= \inf_{F\in\GF_G} \frac1{|F|}H_\mu(\P^F),
$$
where $(F_n)_{n\in\N}$ is a F\o lner \sq\ in $G$, while the latter equality is derived using strong subadditivity (the derivation can be found e.g.\ in \cite{DFR}). In particular, the dynamical entropy of a partition (and hence also the Kolmogorov--Sinai entropy defined below) does not depend on the choice of the F\o lner \sq.
The \emph{Kolmogorov--Sinai entropy} of the measure-preserving $G$-action $(X,\Sigma_X,\mu,G)$ is defined as
$$
h_\mu(X,G)=\sup_{\P\in\GP_X} h_\mu(\P,G).
$$

Next, consider two partitions $\P,\Q\in\GP_X$. One of the simplest ways of defining the \emph{conditional Shannon entropy of $\P$ given $\Q$} is
$$
H_\mu(\P|\Q) = H_\mu(\P\vee\Q)- H_\mu(\Q).
$$
For a partition $\P\in\GP_X$ and a sub-sigma-algebra $\Sigma\subset\Sigma_X$, the \emph{conditional Shannon entropy of $\P$ given $\Sigma$} can be defined (in one of several equivalent ways) as
$$
H_\mu(\P|\Sigma) = \inf_{\Q\in\GP_X, \Q\subset\Sigma} H_\mu(\P|\Q).
$$

Now, going back to two partitions $\P,\Q\in\GP_X$, one defines the \emph{conditional entropy of the process generated by $\P$ given the process generated by $\Q$} as
$$
h_\mu(\P,G|\Q)= \lim_n \frac1{|F_n|}H_\mu(\P^{F_n}|\Q^{F_n}).
$$
We point out that the conditional entropy $H_\mu(\P^{F_n}|\Q^{F_n})$ viewed as a function of $\mathfrak F_G$ is subadditive, {\bf but not} strongly subadditive (see Example \ref{example}).  Thus, although the existence and independence on the F\o lner \sq\ of the above limit is guaranteed by the aforementioned Ornstein--Weiss Lemma, it does not follow directly that the above limit equals the corresponding infimum over $\mathfrak F_G$. It actually is, but the proof requires a stronger tool -- the Pinsker formula (see below).

On the other hand, the function $H_\mu(\P^{F_n}|\Q^G)$, where $\Q^G$ is the {\bf\inv} sub-sigma-algebra generated by all partitions $g^{-1}(\Q)$, $g\in G$, is strongly subadditive, which allows one to define the \emph{relative entropy of the process generated by $\P$ given the process generated by $\Q$}, as
$$
\bar h_\mu(\P,G|\Q)= \lim_n \frac1{|F_n|}H_\mu(\P^{F_n}|\Q^G)=\inf_{F\in\mathfrak F_G}
\frac1{|F|}H_\mu(\P^{F}|\Q^G).
$$

The Pinsker formula is responsible for the equality between conditional and relative entropies, as follows:
$$
h_\mu(\P,G|\Q)= \bar h_\mu(\P,G|\Q)
$$
(this version of the Pinsker formula for $G$-actions can be found as \cite[Theorem 4.4]{WZ} and also as \cite[Lemma 1.1]{GTW}). This is the reason, why we can forget about distinction between these two measure-theoretic notions.

\smallskip
Now consider a measure-theoretic factor map $\pi:Y\to X$ between two measure-preserving $G$-actions $(Y,\Sigma_Y,\nu,G)$ and $(X,\Sigma_X,\mu,G)$ (equivalently, one can consider just an \inv\ sub-sigma-algebra $\Sigma_X\subset\Sigma_Y$ and $\mu$ equal to the restriction of $\nu$ to $\Sigma_X$). In this case we define the \emph{conditional entropy of $\nu$ given $X$} (equivalently, given $\Sigma_X$), as follows
\begin{equation}\label{eq}
h_\nu(Y,G|X)= \inf_{\Q\in\GP_X}\ \sup_{\P\in\GP_Y} h_\nu(\P,G|\Q),
\end{equation}
where $\P$ ranges over all finite measurable partitions of $Y$, while $\Q$ ranges over all finite measurable partitions of $X$ lifted to $Y$ (equivalently, $\Q\subset\Sigma_X$). If $h_\mu(X,G)<\infty$ then $h_\nu(Y,G|X)$ is simply the difference $h_\nu(Y,G)-h_\mu(X,G)$. 
Because of this ``difference formula'', the infimum over $\Q$ and supremum over $\P$ in \eqref{eq} can be swapped.
\smallskip

The notion provided below was coined by Ledrappier \cite{Ledrappier} (he did it for $\Z$-actions, but the extension to $G$-actions requires no modification).

\begin{definition}
Let $\pi:Y\to X$ be a \tl\ factor map between two \tl\ actions $(Y,G)$ and $(X,G)$. If, for every $\nu\in\M(Y,G)$, $h_\nu(Y,G|X)=0$ then $(Y,G)$ (together with the factor map $\pi$) is called a \emph{principal extension of $(X,G)$}.
\end{definition}

By the standard variational principle, it follows that principal extension preserves \tl\ entropy. On the other hand, preservation of \tl\ entropy is a strictly weaker condition.

\subsection{Entropy structure}
By a \emph{structure} on a compact metric space $\M$ we will understand any nondecreasing \sq\ of commonly bounded nonnegative functions on $\M$, $\F =(f_k)_{k\ge 0}$ with $f_0\equiv 0$. Clearly, the pointwise \emph{limit function} $f=\lim_k f_k$ exists and is nonnegative and bounded.
Two structures $\F =(f_k)_{k\ge 0}$ and $\F' = (f'_k)_{k\ge 0}$ are said to be \emph{uniformly equivalent} if
$$
\forall_{\varepsilon>0,\,k_0\ge 0}\ \exists_{k\ge 0}\ \ (f'_k>f_{k_0}-\varepsilon \text{ and } f_k>f'_{k_0}-\varepsilon).
$$
Notice the obvious fact that uniformly equivalent structures have a common limit function.

Entropy structure of a topological $G$-action is defined in \cite{DownarowiczZhang} in exactly the same manner as it is done for $G=\Z$ in \cite{BD}. We present here only one of several equivalent definitions. By \cite[Theorem 3.2]{H}, any topological action $(X, G)$ has a principal \zd\ extension $(X', G)$. This allows one to define entropy structure in two steps, the first one being the definition for actions on \zd\ spaces, while the second is an easy generalization relying on principal \zd\ extensions. 

\begin{definition}\label{ensz}
Let $(X,G)$ be a topological action on a \zd\ space and assume that $\htop(X,G)<\infty$.
Let $(\P_k)_{k\in\N}$ be a \emph{refining} \sq\ of finite clopen partitions, meaning that $\P_{k+1}\succcurlyeq\P_k$ for each $k\in\N$ and $\lim_k\diam(\P_k)=0$, where $\diam(\P_k)$ stands for the maximal diameter of an atom of $\P_k$. For every $\mu\in\M(X,G)$ we set $h_k(\mu) = h_\mu (\P_k, G)$. Then the \sq\ $\H = (h_k)_{k\ge 0}$ on $\M(X,G)$ is a structure called an \emph{\ens} of $(X,G)$.
\end{definition}

It is obvious that the nondecreasing limit $\lim_{k}h_k$ equals the entropy function $h$ on $\M(X,G)$, given by $h(\mu)=h_\mu(X,G)$. It is known that \ens s arising from different refining \sq s of finite clopen partitions are uniformly equivalent (see \cite{BD} or \cite{D2005})). Thus, \ens\ can be defined as the corresponding uniform equivalence class and in this understanding it does not depend on the refining \sq\ of finite clopen partitions (which is equivalent to saying that it is a conjugacy invariant within the class of \zd\ systems).

\begin{definition}\label{ensf}
	Let $(X,G)$ be a topological action with finite entropy. We define the \emph{\ens} of $(X,G)$ as any structure $\H=(h_k)_{k\ge 0}$ on $\M(X,G)$, such that for any principal \zd\ extension $\pi':X'\to X$, and any entropy structure $\H'=(h'_k)_{k\ge 0}$ on $\M(X',G)$, the structure $\H=(h_k)_{k\ge 0}$ lifted against $\pi'$ (i.e.\ the \sq\ $(h_k\circ\pi')_{k\in\N}$) is uniformly equivalent to $\H'$.
\end{definition}

The existence of such an entropy structure (which is by no means obvious) is proved for $\Z$-actions in \cite{BD} (see also \cite{D2005}), while the adaptation to actions of amenable groups can be found in \cite{DownarowiczZhang}. 

It is clear that properly understood entropy structure is a uniform equivalence class and in this understanding it is a conjugacy invariant. Nonetheless, it is technically much more convenient to work with entropy structures understood as individual \sq s of functions $(h_k)_{k\ge 0}$. 
It was proved in \cite{DownarowiczZhang} that an \ens\ of a \zd\ topological $G$-action consists of affine functions $h_k$ such that $h_k-h_{k-1}$ is \usc\ for every $k\in\N$ (in particular, each $h_k$ is \usc). In such case we will say that $\H$ is an \emph{affine structure with \usc\ differences}. In \cite{DownarowiczZhang} (following \cite{BD} and \cite{D2005}) it is shown that any topological action $(X, G)$ with finite entropy has an entropy structure which is affine and has \usc\ differences.

For any bounded from above function $f$ defined on a compact metric space $\mathcal{Mi1}$ we denote by $\tilde{f}$ the \emph{upper semicontinuous envelope} of $f$, i.e.\ the function
$$
\tilde{f} (x) = \max \{f (x), \limsup\limits_{x'\rightarrow x} f (x')\}.
$$
In fact, $\tilde{f}$ equals the pointwise infimum of all continuous functions dominating $f$ and it is the smallest upper semicontinuous function dominating $f$.

Now let $\H=(h_k)_{k\ge 0}$ be an entropy structure of a topological action $(X,G)$ with finite entropy and let $h$ be the entropy function on $\M(X,G)$. We set
$$\theta_k (\mu) = h (\mu) - h_k (\mu)\ \ \text{and}\ \ u_1 (\mu) = \lim_{k\rightarrow \infty} \tilde{\theta_k} (\mu)\ \ \ (\mu\in \mathcal{Mi1} (X, G)).$$ 

Note that the functions $\theta_k$ are equal to the conditional entropies $h_\mu(X,G|X_k)$, where $(X_k,G)$ is the symbolic factor $(X,G)$ generated by the clopen partition $\P_k$, and they converge pointwise to zero. So, $u_1$ relies only on the defects of upper semicontinuity of the conditional entropy functions $\theta_k$. The function $u_1$ is sometimes called the \emph{tail entropy function} (on \im s), although this term is also used for a slightly different function. The reason why it is denoted by $u_1$ is that this is the first of a transfinite \sq\ of functions $(u_\alpha)_{\alpha<\omega_1}$ responsible for ``higher order defects''. For more details the reader is referred to either \cite{BD}, \cite{D2005} or \cite{D1} in case of $\Z$-actions, or to \cite{DownarowiczZhang} for $G$-actions (as a matter of fact, the acting countable amenable group plays no role in this context).

\section{The \tl\ Pinsker formula}
The aim of this section is to prove a ``\tl\ Pinsker formula'', i.e.\ equality between \tl\ conditional and \tl\ relative entropies. This formula will be needed in the next section, in the proof of the conditional variational principle for \zd\ systems, which in turn will allow us to prove the tail variational principle for such actions. We begin with a lemma.

\begin{lemma} \label{1306140805}
Let $\CU\in \GC_X^o$ and $\{\mathcal{V}_n: n\in \mathbb{N}\}\subset \GC_X^c$. Let $\CW=\bigvee_{n\in\N}\mathcal V_n$ and, for each $n\in\N$, let $\CW_n=\bigvee_{i=1}^n\mathcal V_i$. Then there exists $n_0\in \mathbb{N}$ such that,
for each $n\ge n_0$, we have
$$
N(\CU,\CW)=N(\CU,\CW_n).
$$
\end{lemma}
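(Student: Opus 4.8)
The plan is to reduce the statement to showing that the nonincreasing integer sequence $\bigl(N(\CU,\CW_n)\bigr)_n$ stabilizes at the value $N(\CU,\CW)$, and then to produce a single element of $\CW$ witnessing that value via a König's lemma argument combined with compactness. First I would fix, for $A\subseteq X$, the auxiliary quantity $c(A)=\min\{|\CU'|:\CU'\subseteq\CU,\ A\subseteq\bigcup\CU'\}$, so that $N(\CU,\CV)=\max_{V\in\CV}c(V)$ for any cover $\CV$, and record two elementary monotonicities: $c$ is monotone with respect to inclusion ($A\subseteq B$ implies $c(A)\le c(B)$, since any subcover of $\CU$ covering $B$ also covers $A$), and consequently $N(\CU,\cdot)$ does not increase when the second cover is refined. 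Since each element $\bigcap_{i=1}^{n+1}V_i$ of $\CW_{n+1}$ is contained in the element $\bigcap_{i=1}^{n}V_i$ of $\CW_n$, and each element $\bigcap_{i\in\N}V_i$ of $\CW$ is contained in every $\bigcap_{i=1}^{n}V_i$, we get $\CW\succeq\CW_{n+1}\succeq\CW_n$, hence $N(\CU,\CW)\le N(\CU,\CW_{n+1})\le N(\CU,\CW_n)$. Thus the sequence $\bigl(N(\CU,\CW_n)\bigr)_n$ is a nonincreasing sequence of integers, bounded below by $N(\CU,\CW)\ge 0$ and above by $|\CU|$; it is therefore eventually constant, say equal to $M$ for all $n\ge n_0$, with $M\ge N(\CU,\CW)$ and $N(\CU,\CW_n)\ge M$ for every $n$. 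It remains to prove the reverse inequality $N(\CU,\CW)\ge M$.

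For this I would exhibit a single $W\in\CW$ with $c(W)\ge M$. Call a tuple $(V_1,\dots,V_n)$ with $V_i\in\CV_i$ \emph{bad} if $c\bigl(\bigcap_{i=1}^{n}V_i\bigr)\ge M$. Since each $\CV_i$ is finite, all such tuples form a finitely branching tree; by the inclusion $\bigcap_{i=1}^{n-1}V_i\supseteq\bigcap_{i=1}^{n}V_i$ and monotonicity of $c$, any prefix of a bad tuple is again bad, so the bad tuples form a subtree closed under passing to prefixes. Because $N(\CU,\CW_n)\ge M$ for every $n$, some element of $\CW_n$ has $c$-value at least $M$, and writing that element as $\bigcap_{i=1}^{n}V_i$ yields a bad tuple at level $n$; hence the bad subtree is infinite. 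By König's lemma it has an infinite branch $(V_1,V_2,\dots)$, and I set $W=\bigcap_{i\in\N}V_i\in\CW$ together with $W_n=\bigcap_{i=1}^{n}V_i$, so that $c(W_n)\ge M$ for all $n$.

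Finally I would upgrade $c(W_n)\ge M$ to $c(W)\ge M$ by compactness. Suppose $c(W)<M$, so $W\subseteq\bigcup\CU'$ for some $\CU'\subseteq\CU$ with $|\CU'|<M$; set $O=\bigcup\CU'$, an open set. The sets $W_n$ are closed, hence compact, and decreasing, with $\bigcap_n W_n=W\subseteq O$, so the decreasing compact sets $W_n\setminus O$ have empty intersection. By the finite intersection property some $W_N\subseteq O$, whence $c(W_N)\le|\CU'|<M$, contradicting badness of the tuple at level $N$. Therefore $c(W)\ge M$, giving $N(\CU,\CW)\ge c(W)\ge M$, and combined with $N(\CU,\CW)\le M$ we conclude $N(\CU,\CW)=M=N(\CU,\CW_n)$ for all $n\ge n_0$.

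The step I expect to be the main obstacle is precisely the reverse inequality $N(\CU,\CW)\ge M$. Although for each \emph{fixed} $W=\bigcap_n V_n$ the numbers $c\bigl(\bigcap_{i=1}^{n}V_i\bigr)$ decrease to $c(W)$ and eventually stabilize (by the same compactness argument), the stabilization index depends on $W$, and $\CW$ may well be uncountable, so no naive uniform $n$ is available. The König's lemma step is exactly what converts these individual stabilizations into a single branch producing one element $W\in\CW$ that witnesses the value $M$, while the compactness step is what transfers a finite subcover of the infinite intersection back to a finite stage.
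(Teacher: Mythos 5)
Your proof is correct and follows essentially the same route as the paper: both arguments reduce the lemma to compactness of the product $\prod_{n\in\N}\CV_n$ combined with the pointwise fact that, along a fixed sequence $(V_n)_{n\in\N}$, the covering numbers $c\bigl(\bigcap_{i=1}^{n}V_i\bigr)$ stabilize at $c\bigl(\bigcap_{i\in\N} V_i\bigr)$ (proved by the identical decreasing-compact-sets-inside-an-open-union argument). The only difference is packaging: where you run K\"onig's lemma on the tree of bad tuples, the paper regards the $c$-values as continuous functions $f_n$ on the compact space $\prod_{n\in\N}\CV_n$, decreasing pointwise to $f_\infty$, and extracts the witnessing element of $\CW$ via the exchange of suprema and infima from \cite[Proposition 2.4]{BD} --- which is exactly the compactness principle your K\"onig's-lemma branch instantiates by hand.
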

\begin{proof}
Consider the compact metric space $\mathbf{V}=\prod_{n\in \mathbb{N}} \mathcal{V}_n$, where each $\mathcal{V}_n$ is viewed as a discrete space containing exactly $|\mathcal{V}_n|$ points. 
This space consists of formal \sq s $\mathbf v=(V_n)_{n\in\N}$ such that $V_n\in\mathcal V_n$ for each $n\in\N$, and for every such element $\mathbf v$ we let $W_n(\mathbf v)= \bigcap_{i=1}^n V_i$ and $W(\mathbf v)= \bigcap_{n\in\N} V_n$. Clearly the families
$\{W_n(\mathbf v):\mathbf v\in\mathbf V\}$ and $\{W(\mathbf v):\mathbf v\in\mathbf V\}$ coincide with $\CW_n$ and $\CW$, respectively. On $\mathbf V$ we define the following functions
\begin{align*}
&f_n(\mathbf v) = \min\{|\CU'|:\CU'\subset\CU, W_n(\mathbf v)\subset\bigcup\CU'\}, \ \ (n\in\N),\\
&f_\infty(\mathbf v) = \min\{|\CU'|:\CU'\subset\CU, W(\mathbf v)\subset\bigcup\CU'\},
\end{align*}
and then we have the following obvious identities:
\begin{align*}
&N(\CU,\CW_n)=\max\{f_n(\mathbf v):\mathbf v\in\mathbf V\}, \ \ (n\in\N),\\ 
&N(\CU,\CW) = \max\{f_\infty(\mathbf v):\mathbf v\in\mathbf V\}. 
\end{align*}
Clearly, the \sq\ of functions $(f_n)_{n\in\N}$ is nonincreasing and dominates $f_{\infty}$. Further, the functions $f_n$ are by definition constant on the clopen cylinders in $\mathbf V$ consisting of \sq s $\mathbf v$ which agree on the initial $n$ positions. This implies that the functions $f_n$ are continuous. Once we show that the functions $f_n$ converge pointwise to $f_\infty$, the proof will be ended, because in such case, by the already mentioned elementary ``exchange of suprema and infima'' statement (see e.g.\ \cite[Proposition 2.4 ]{BD}), we will have 
$$
\max\{f_\infty(\mathbf v):\mathbf v\in\mathbf V\}=\max_{\mathbf v\in\mathbf V}\ \inf_n f_n(\mathbf v) = \inf_n\ \max_{\mathbf v\in\mathbf V}f_n(\mathbf v).
$$
Since the values of $f_n$ are integers, the latter infimum must be attained for some $n_0$.

The pointwise convergence of the functions $f_n$ to $f_\infty$ is proved as follows. For $\mathbf v\in\mathbf V$, let $\CU'$ be a subfamily of $\CU$ which covers $W(\mathbf v)$, and assume it has the minimal cardinality $f_\infty (\mathbf v)$. Since the sets
$W_n(\mathbf v)$ are compact and their decreasing intersection is $W(\mathbf v)$, it is clear that for some $n$, $W_n(\mathbf v)$ is covered by the same (open) family $\CU'$. This shows that $f_n(\mathbf v) \le f_\infty(\mathbf v)$ for some $n$, while the converse inequality is trivial. 
\end{proof}

We are able to prove the \tl\ Pinsker formula in case $\CU$ is open and $\CW$ is closed:

\begin{theorem} \label{1007031233}
Let $(F_n)_{n\in \mathbb{N}}$ be a F\o lner sequence of $G$ and $\CU\in \GC_X^o,\CW\in\GC_X^c$. Then
$$
h_G(\CU|\CW)= \bar h_G(\CU|\CW).
$$
\end{theorem}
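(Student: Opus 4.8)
The plan is to prove the two inequalities separately: the inequality $\bar h_G(\CU|\CW)\le h_G(\CU|\CW)$ is routine monotonicity, while the reverse inequality carries the entire difficulty and is where Lemma~\ref{1306140805} enters.

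For the easy inequality I would simply observe that for every $n$ the invariant cover $\CW^G$ refines $\CW^{F_n}$ (being a larger join), so every element of $\CW^G$ lies inside an element of $\CW^{F_n}$ and therefore requires no more members of $\CU^{F_n}$ to be covered. Hence $N(\CU^{F_n},\CW^G)\le N(\CU^{F_n},\CW^{F_n})$, i.e. $H(\CU^{F_n}|\CW^G)\le H(\CU^{F_n}|\CW^{F_n})$; dividing by $|F_n|$ and letting $n\to\infty$ yields $\bar h_G(\CU|\CW)\le h_G(\CU|\CW)$.

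For the reverse inequality $h_G(\CU|\CW)\le\bar h_G(\CU|\CW)$ I would first invoke Lemma~\ref{1306140805}, applied to the open cover $\CU$ and the closed covers $\{g^{-1}\CW:g\in G\}$ (whose join is $\CW^G$), to obtain a \emph{finite} set $E_0\in\GF_G$ with $e\in E_0$ and $N(\CU,\CW^{E_0})=N(\CU,\CW^G)$; thus the covering numbers of $\CU$ against the invariant conditioning already stabilize at the bounded range $E_0$. I would then try to replace, in the exponent, the full invariant conditioning $\CW^G$ by the bounded-range conditioning $\CW^{E_0F_n}$. The elementary submultiplicativity of counting numbers, $N(\CA,\CB)\le N(\CA,\CC)\,N(\CC,\CB)$ for any $\CC$, gives
$$
H(\CU^{F_n}|\CW^{F_n})\le H(\CU^{F_n}|\CW^{E_0F_n})+H(\CW^{E_0F_n}|\CW^{F_n}),
$$
and since $\CW^{E_0F_n}=\CW^{F_n}\vee\CW^{E_0F_n\setminus F_n}$, the last term is at most $|E_0F_n\setminus F_n|\log|\CW|$, which is $o(|F_n|)$ by the F\o lner property (as $E_0F_n\setminus F_n\subseteq\bigcup_{e\in E_0}(eF_n\setminus F_n)$). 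Dividing by $|F_n|$, the reverse inequality would follow once I show that the bounded-range conditioning is asymptotically as effective as the invariant one, namely $\tfrac1{|F_n|}\bigl(H(\CU^{F_n}|\CW^{E_0F_n})-H(\CU^{F_n}|\CW^G)\bigr)\to 0$ (the two differ by a nonnegative amount, since $\CW^G$ refines $\CW^{E_0F_n}$).

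The main obstacle is precisely this last \emph{locality} step. The heuristic is that each translate $g^{-1}\CU$ interacts with $\CW$ only within range $E_0$, so $\CU^{F_n}$ interacts with $\CW$ only over $E_0F_n$, whence the invariant conditioning beyond $E_0F_n$ should be irrelevant except within bounded distance of the boundary $\partial F_n$. The difficulty is that Lemma~\ref{1306140805}, applied to the localized cover $\CU^{F_n}$, produces a resolving window depending on $n$ with no \emph{a priori} control, whereas I need the window to be essentially $E_0F_n$ \emph{uniformly} in $n$; the naive coordinatewise covering estimate $s(W)=\bigcap_h h^{-1}W_{hs}$ destroys the correlations between the coordinates in $F_n$ and is far too lossy to recover the correct exponential rate. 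I expect to resolve this by combining the finite range furnished by the Lemma with an amenable boundary argument (a F\o lner, or tiling, decomposition of $F_n$ into a bulk of deep-interior positions $f$ with $E_0f\subseteq F_n$, for which the relevant conditioning already sits inside $\CW^{F_n}$, plus a boundary set of density $o(1)$ absorbing the discrepancy), thereby confining the gain of the invariant conditioning to a negligible boundary layer. With the locality step in hand, the two estimates combine to give $h_G(\CU|\CW)\le\bar h_G(\CU|\CW)$, completing the proof.
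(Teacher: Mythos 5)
Your first inequality and your reduction (replacing $\CW^{F_n}$ by $\CW^{E_0F_n}$ at a F\o lner-negligible cost) are both correct, but the ``locality step'' you defer is not an auxiliary lemma: it \emph{is} the theorem. Since $e\in E_0$ gives $\CW^G\succeq\CW^{E_0F_n}\succeq\CW^{F_n}$, one has
$$
H(\CU^{F_n}|\CW^G)\ \le\ H(\CU^{F_n}|\CW^{E_0F_n})\ \le\ H(\CU^{F_n}|\CW^{F_n}),
$$
so the statement $H(\CU^{F_n}|\CW^{E_0F_n})-H(\CU^{F_n}|\CW^G)=o(|F_n|)$ is, modulo your (valid but easy) reduction, equivalent to the equality $h_G(\CU|\CW)=\bar h_G(\CU|\CW)$ that you are trying to prove; nothing has been gained yet. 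Worse, the window $E_0$ you extract from Lemma~\ref{1306140805} for the \emph{single} cover $\CU$ cannot power the repair you sketch. For any position $f\in F_n$ one has $\CW^{E_0F_n}\succeq\CW^{E_0f}=f^{-1}(\CW^{E_0})$, so the best position-by-position estimate is
$$
H(\CU^{F_n}|\CW^{E_0F_n})\ \le\ \sum_{f\in F_n}H(f^{-1}\CU|\CW^{E_0F_n})\ \le\ |F_n|\,H(\CU|\CW^{E_0})\ =\ |F_n|\,H(\CU|\CW^{G}),
$$
whereas the target is $|F_n|\bigl(\bar h_G(\CU|\CW)+o(1)\bigr)$, and $\bar h_G(\CU|\CW)=\inf_F\frac1{|F|}H(\CU^F|\CW^G)$ is in general \emph{strictly} smaller than the one-site entropy $H(\CU|\CW^G)$. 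This overshoot is not a boundary effect that a core/boundary decomposition into single positions can absorb --- it occurs in the bulk, precisely because knowing that $\CW^{E_0}$ resolves $\CU$ says nothing about how $\CW^G$ resolves the correlated joins $\CU^{F_n}$. So the proposal has a genuine gap, and the proposed resolution of that gap would fail.

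The missing idea (and the paper's route) is to run your bulk/boundary scheme with \emph{large F\o lner tiles} rather than single positions, applying Lemma~\ref{1306140805} at the tile scale. Fix $\eps>0$ and choose tile shapes $F_k$ with $\frac1{|F_k|}H(\CU^{F_k}|\CW^G)<\bar h_G(\CU|\CW)+\eps$ (possible because $\bar h_G(\CU|\CW)$ is a limit); apply the Lemma to each cover $\CU^{F_k}$ to obtain one finite window $B$ with $H(\CU^{F_k}|\CW^{B})=H(\CU^{F_k}|\CW^{G})$. Using the F\o lner sequence of \cite[Theorem 5.2]{DHZ}, a large $(B,\delta)$-invariant set $F=F_n$ decomposes \emph{exactly} into disjoint translates $F_kc$, and plain subadditivity gives $H(\CU^F|\CW^F)\le\sum_{k,c}H(\CU^{F_kc}|\CW^F)$. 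For every tile whose translate $c$ lies in the $B$-core of $F$ one has $B\subseteq Fc^{-1}$, hence
$$
H(\CU^{F_kc}|\CW^F)=H(\CU^{F_k}|\CW^{Fc^{-1}})\le H(\CU^{F_k}|\CW^{B})\le|F_k|\bigl(\bar h_G(\CU|\CW)+\eps\bigr),
$$
while the boundary tiles are absorbed by the trivial bound $|F_k|\log|\CU|$ together with $(B,\delta)$-invariance. Here the finite window genuinely does its job, because it resolves the invariant conditioning for the very joins $\CU^{F_k}$ whose normalized entropies already approximate $\bar h_G(\CU|\CW)$; this tile-scale use of the Lemma, combined with exact tilings of amenable groups, is exactly what your proposal is missing.
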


\begin{remark}
For $\mathbb{Z}$-actions and the standard F\o lner \sq\ $F_n= \{0,1,\dots,n-1\}$ the above equality follows quickly from Lemma \ref{1306140805}. The key ingredient is that both conditional counting entropies, $H(\CU^{F}|\CW^{F})$ and $H(\CU^{F}|\CW^G)$, are subadditive on $\mathfrak F$, which for $\Z$-actions is sufficient for the ``infimum rule'' to hold: the involved limits on both sides are in fact infima. For more general $G$-actions, subadditivity is insufficient for the infimum rule; strong subadditivity (or at least Shearer's inequality, see e.g.\ \cite{DFR}) is required. The condititonal counting entropy is not strongly subadditive (even for clopen partitions, see Example \ref{example}) and thus the proof is more complicated.
\end{remark}

\begin{proof}
Since $N(\CU^F,\CW^F)\ge N(\CU^F,\CW^G)$ for each $F\in \mathfrak{F}_G$, the inequality ``$\ge$'' in the theorem is obvious and it remains to show that for a suitable F\o lner \sq\
$(F_n)_{n\in\N}$, every $\eps>0$ and large enough $n$, one has
\begin{equation} \label{1307082008}
\frac{1}{|F_n|}H(\CU^{F_n}|\CW^{F_n})\le\bar h_G(\CU|\CW)+2\eps.
\end{equation}

We now invoke the theory of tilings\footnote{The quasitilings introduced by Ornstein and Weiss in \cite{OW} should also suffice, but tilings are simply more convenient to use.}. It follows from \cite[Theorem 5.2]{DHZ} that there exists a F\o lner \sq\ $(F_n)_{n\in\N}$ of sets containing the unit $e$, which breaks into countably many portions $\{F_1,F_2,\dots,F_{n_1}\}$, $\{F_{n_1+1},F_{n_1+2},\dots,F_{n_2}\}$, $\{F_{n_2+1},F_{n_2+2},\dots,F_{n_3}\}$, $\dots$, such that every set $F_n$ with $n>n_{i+1}$ is a disjoint union of shifted sets from the $i$th portion:
\begin{equation} \label{tiling}
F_n=\bigcupdot_{k=n_i+1}^{n_{i+1}}\ \bigcupdot_{c\in C_{k,n}}F_kc
\end{equation}
(that is, such finite sets $C_{k,n}\subset F_n$ exist). Now, fix any $\eps>0$. There exists an $i\in \mathbb{N}$ such that
$$
\frac{1}{|F_k|} H(\CU^{F_k}|\CW^G)< \bar h_G(\CU|\CW)+\eps, \text{ \ for each\ }\ k\in \{n_i+1,\dots,n_{i+1}\}.
$$
For each $k$ as above we apply Lemma \ref{1306140805} to $\CU^{F_k}$ and get a set $B_k\in \mathfrak{F}_G$ (we can assume that $e\in B_k$), such that $N(\CU^{F_k},\CW^G)=N(\CU^{F_k},\CW^{B_k})$. 
Let $B=\bigcup_{k=n_i+1}^{n_{i+1}}B_k$ (clearly, $B\in \mathfrak{F}_G$). Then we have
$$
N(\CU^{F_k},\CW^G)= N(\CU^{F_k},\CW^B),
$$
for all $k\in \{n_i+1,\dots,n_{i+1}\}$, in particular
\begin{equation}\label{tera}
H(\CU^{F_k}|\CW^B)<|F_k|(\bar h_G(\CU|\CW)+\eps).
\end{equation}
For any $\delta>0$ and large enough $n\in\N$, the set $F_n$ (henceforth abbreviated as $F$) is $(B,\delta)$-invariant (i.e. $\frac{|B F_n\Delta F_n|}{|F_n|}< \delta$ where $\Delta$ denotes the symmetric difference of sets), and it splits as the disjoint union \eqref{tiling}.

Let $F_B$ denote the \emph{$B$-core} of $F$, that is $F_B=\{g\in F: Bg\subset F\}$. For $\delta$ sufficiently small, the $(B,\delta)$-invariance of $F$ implies that 
\begin{equation}\label{delta}
|F\setminus F_B|<\frac{\eps}{D\cdot (1+ \log|\CU|)}|F|, 
\end{equation}
where $D=(n_{i+1}-n_i)\max\{|F_k|:k=n_i+1,\dots,n_{i+1}\}$. We have, by subadditivity
\begin{equation} \label{1007022115}
\frac{1}{|F|}H(\CU^{F}|\CW^{F})
\le \frac1{|F|}\sum_{k=n_i+1}^{n_{i+1}}\ \sum_{c\in C_{k,n}}H(\CU^{F_kc}|\CW^F).
\end{equation}
For each $k\in\{n_i+1,\dots,n_{i+1}\}$ we have (using \eqref{tera} and \eqref{delta} to get the last inequality):
\begin{multline*}
\sum_{c\in C_{k,n}}H(\CU^{F_kc}|\CW^F) =
\sum_{c\in C_{k,n}\setminus F_B}H(\CU^{F_k}|\CW^{Fc^{-1}})+
\sum_{c\in C_{k,n}\cap F_B}H(\CU^{F_k}|\CW^{Fc^{-1}})\\
\le |F\setminus F_B|\cdot |F_k|\cdot \log|\CU|+
\sum_{c\in C_{k,n}\cap F_B}H(\CU^{F_k}|\CW^{B})
\ \ \le \ \ \frac\eps{n_{i+1}-n_i}|F|+|C_{k,n}|\cdot |F_k|(\bar h_G(\CU|\CW)+\eps).
\end{multline*}
Therefore, the right hand side (and thus also the left hand side) of \eqref{1007022115} is dominated by
$$
\frac1{|F|}\sum_{k=n_i+1}^{n_{i+1}} \Bigl(\frac\eps{n_{i+1}-n_i}|F|+|C_{k,n}|\cdot |F_k|(\bar h_G(\CU|\CW)+\eps)\Bigr) = \bar h_G(\CU|\CW)+2\eps,
$$
(we have used the equality $\sum_{k=n_i+1}^{n_{i+1}}|C_{k,n}|\cdot |F_k|=|F|$, which follows from the disjoint union \eqref{tiling}). We have just shown \eqref{1307082008}, which ends the proof.
\end{proof}

\section{Conditional variational principle for \zd\ systems}

Our main Theorems \ref{tail} and \ref{main result} depend on the following \emph{variational principle for topological conditional entropy in \zd\ systems}. By the \tl\ Pinsker formula (Theorem~\ref{1007031233}), the topological conditional entropy of a clopen disjoint cover given another clopen disjoint cover equals the topological relative entropy given a \tl\ factor. From here we can use some existing results.

\begin{prop} \label{201505172121}
	Let $(X, G)$ be a topological action and $\mathcal{P}, \mathcal{Q}\in \GP_X$. Assume that $\mathcal{P}\succeq \mathcal{Q}$, and that $\mathcal{P}$ (and hence $\mathcal{Q}$) is clopen (i.e. $\mathcal{P}, \mathcal{Q}\in \GP_X\cap\GC^o_X\cap\GC^c_X$). Then
	\begin{equation*}
		h_G (\P|\Q)= \max \{h_\mu(\P,G)- h_\mu(\Q,G): \mu\in\M(X,G)\}.
	\end{equation*}
\end{prop}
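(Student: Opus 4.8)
The plan is to rewrite both sides of the asserted identity in ``relative'' form and then feed them into the already-existing variational principle for \tl\ relative entropy, so that the genuinely hard analytic step is entirely carried by the \tl\ Pinsker formula proved above.

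First I would handle the left-hand side. Since $\P$ is open and $\Q$ is closed, Theorem \ref{1007031233} applies verbatim and gives $h_G(\P|\Q)=\bar h_G(\P|\Q)$. As $\Q$ is a clopen partition it is in particular a closed \usc\ partition, so $\Q^G$ determines a closed \inv\ equivalence relation and hence a \tl\ factor $(Z,G)$ of $(X,G)$, namely the subshift generated by $\Q$; with this factor $\bar h_G(\P|\Q)=\bar h_G(\P|Z)$ by definition. For the right-hand side I would use the chain rule: for every $\mu\in\M(X,G)$ one has $h_\mu(\P\vee\Q,G)=h_\mu(\Q,G)+h_\mu(\P,G|\Q)$, and since $\P\succeq\Q$ forces $\P\vee\Q=\P$ (all quantities being finite, the partitions being finite), this rearranges to $h_\mu(\P,G)-h_\mu(\Q,G)=h_\mu(\P,G|\Q)$. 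The measure-theoretic Pinsker formula recalled earlier then converts the conditional entropy into the relative one, $h_\mu(\P,G|\Q)=\bar h_\mu(\P,G|\Q)$, so the right-hand side of the proposition equals $\max\{\bar h_\mu(\P,G|\Q):\mu\in\M(X,G)\}$.

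It then remains to identify $\bar h_G(\P|Z)$ with $\max_\mu \bar h_\mu(\P,G|\Q)$, and this is exactly the content of the variational principle for \tl\ relative entropy \cite[Theorem 13.3]{DZ}. If that result is formulated for whole systems rather than for a single cover, I would first pass to the subshift factor $(X_\P,G)$ generated by $\P$; by the elementary remark recorded above this leaves the relative entropy unchanged, $\bar h_G(\P|\Q)=\bar h_G(X_\P|Z)$ (with $Z$ now a factor of $X_\P$), and since every $\nu\in\M(X_\P,G)$ is the push-forward of some $\mu\in\M(X,G)$ with $\bar h_\nu(X_\P,G|Z)=\bar h_\mu(\P,G|\Q)$, the full-system principle yields the partition version. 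Chaining the three displayed equalities then gives the claim. The one place demanding attention is precisely this last invocation: one must check that the cited principle is applied to the correct object (the clopen partition $\P$, realized through its subshift factor) and that the supremum it produces is actually attained, the latter being guaranteed by upper semicontinuity of the relevant entropy functions on the weakly-star compact set $\M(X,G)$. Everything else is bookkeeping, the substantive difficulty having already been discharged by Theorem \ref{1007031233}.
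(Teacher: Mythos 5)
Your proposal is correct and follows essentially the same route as the paper's own proof: topological Pinsker formula (Theorem \ref{1007031233}) to pass from conditional to relative entropy, transfer to the subshift factors generated by $\P$ and $\Q$ (your $(Z,G)$ is exactly $(X_\Q,G)$), the relative variational principle of \cite[Theorem 13.3]{DZ}, the difference formula $h_\mu(\P,G|\Q)=h_\mu(\P,G)-h_\mu(\Q,G)$, and upper semicontinuity to replace the supremum by a maximum. The only cosmetic difference is that you also route the measure-theoretic side through the Pinsker formula $h_\mu(\P,G|\Q)=\bar h_\mu(\P,G|\Q)$, a step the paper avoids because the cited variational principle is already stated in terms of the conditional entropy $h_\nu(X_\P,G|X_\Q)$.
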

\begin{proof}
Since $\mathcal{P}$ is a finite clopen partition of $X$, it generates naturally 
a symbolic \tl\ factor $(X_\P,G)$ of $(X,G)$ with the alphabet $\P$ via the \emph{itinerary map}
$$
\pi_\P(x) = (x_g)_{g\in G},
$$
where, for each $g\in G$, we set $x_g = P\in\P \iff g(x)\in P$. The action of $G$ on such itineraries is the standard shift: $h((x_g)_{g\in G})=(x_{gh})_{g\in G}$\ \ $(h\in G)$. Likewise, $\Q$ generates a symbolic $G$-action $(X_\Q,G)$ with the alphabet $\Q$ and there is a natural \tl\ factor map from $(X_\P,G)$ onto $(X_\Q,G)$ which coincides with the following \emph{one-block code} $\pi_{\P,\Q}$:
$$
\pi_{\P,\Q}((x_g)_{g\in G})=(y_g)_{g\in G},
$$
where, for each $g\in G$, $y_g$ is the unique element of $\Q$ which contains $x_g$.

Now, since $\P$ is (in particular) an open cover and $\Q$ is (in particular) a closed cover, we can apply our \tl\ Pinsker formula (Theorem \ref{1007031233}), which reads
$$
h_G(\P|\Q)=\bar h_G(\P|\Q).
$$
Because $\P$ and $\Q$ are clopen partitions and they are \tl\ generators of the systems $(X_\P,G)$ and $(X_\Q,G)$, respectively, the right hand side equals the topological relative entropy $\bar h_G(X_\P|X_\Q)$ of the topological factor map $\pi_{\P,\Q}$ for which the variational principle is already known, see \cite[Theorem 13.3]{DZ} (another proof can be found in \cite[Theorem 5.1]{Yan}):
$$
\bar h_G(X_\P|X_\Q) = \sup\{h_\nu (X_\P,G|X_\Q):\nu\in\M(X_\P,G)\}.
$$
Since $\P$ and $\Q$ are measure-theoretic generators in $(X_\P,G)$ and $(X_\Q,G)$ for any \im\ in the respective systems, we have, for each $\nu\in\M(X_\P,G)$, the equality
$$
h_\nu (X_\P,G|X_\Q)=h_\mu(\P,G|\Q)=h_\mu(\P,G)- h_\mu(\Q,G),
$$
where $\mu$ is any \im\ on $X$ satisfying $\nu = \pi_\P(\mu)$.
Thus, we obtain the desired equality
$$
h_G(\P|\Q)=\max \{h_\mu(\P,G)- h_\mu(\Q,G): \mu\in \mathcal{M} (X, G)\},$$
where the supremum is replaced by the maximum due to upper semicontinuity of the function
$\mu\mapsto h_\mu(\P,G|\Q)$ for finite clopen partitions.
\end{proof}

\section{Preservation of \tl\ tail entropy}
In this section we show that topological tail entropy of a \tl\ $G$-action is preserved by principal extensions. This fact for $\mathbb{Z}$-actions is well known and it was proved by F.\ Ledrappier, see \cite[Theorem 3]{Ledrappier}. It also follows \emph{a posteriori} from the tail variational principle and the (discussed earlier) preservation of entropy structure by principal extensions. But since we intend to use it in the derivation of the tail variational principle, we need an independent proof. The shortest proof of Proposition \ref{principal} that we were able to come up with relies heavily on the already mentioned variational principle for topological relative entropy (\cite[Theorem 13.3]{DZ} or \cite[Theorem 5.1]{Yan}) and, in the most crucial place, on a result from \cite{ZhouZhangChen}. We use these results to jump several times between \tl\ and measure-theoretic notions. Clearly, we do not claim any credit for this Proposition, and also we allow ourselves to be a bit sketchy.

\begin{prop} \label{principal}
Let $\pi:Y\to X$ be a \tl\ factor map between two \tl\ actions $(Y,G)$ and $(X,G)$. If $\pi$ is a principal extension then $h^* (Y, G)= h^* (X, G)$.
\end{prop}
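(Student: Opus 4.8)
The plan is to prove both inequalities $h^*(Y,G)\le h^*(X,G)$ and $h^*(X,G)\le h^*(Y,G)$ separately, exploiting the fact that $\pi$ is a principal extension to control the entropy that $Y$ adds over $X$. The inequality $h^*(X,G)\le h^*(Y,G)$ is the easy direction: given any finite open cover $\CW$ of $X$, its lift $\pi^{-1}(\CW)$ is a finite open cover of $Y$, and for any finite open cover $\CU$ of $X$ one checks directly from the definitions that $h_G(\pi^{-1}(\CU)|\pi^{-1}(\CW))=h_G(\CU|\CW)$ since $N((\pi^{-1}\CU)^F|(\pi^{-1}\CW)^F)=N(\CU^F|\CW^F)$ (the itinerary structure on $X$ pulls back verbatim). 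Taking suprema over $\CU$ and infima over $\CW$ on the $Y$-side only enlarges the family of available covers, so $h^*(X,G)\le h^*(Y,G)$.

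For the hard direction $h^*(Y,G)\le h^*(X,G)$, the idea is to show that the extra covers available on $Y$ cannot increase the tail entropy, because $\pi$ is principal. First I would fix a finite open cover $\CU$ of $Y$ and a finite open cover $\CW_X$ of $X$; I want to estimate $h_G(\pi^{-1}\CU\,|\,\pi^{-1}\CW_X\vee\CV)$ for suitable fine open covers $\CV$ of $Y$ and show it can be made small. The natural route is to pass to invariant measures via the variational principle for topological relative entropy (\cite[Theorem 13.3]{DZ}) together with the characterization of topological tail entropy in terms of selfjoinings from \cite[Theorem 3.1]{ZhouZhangChen}: this lets me rewrite the relevant topological quantities as suprema over invariant measures on a selfjoining, where the principality hypothesis $h_\nu(Y,G|X)=0$ for every $\nu\in\M(Y,G)$ can be applied fiberwise. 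Concretely, on the measure side the contribution of $\CU$ over $\pi^{-1}\CW_X$ splits, via a conditional-entropy decomposition, into a part measured already in $X$ (controlled by $h^*(X,G)$) and a part representing the entropy of the fibers of $\pi$, which vanishes precisely because the extension is principal.

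The main obstacle, and the reason for the authors' stated sketchiness, will be marshalling the passage between topological and measure-theoretic notions cleanly: the selfjoining characterization of $h^*$ in \cite{ZhouZhangChen} is what converts an infimum over conditioning covers into a statement about invariant measures on $Y\times_X Y$ (or a similar selfjoining over $X$), and it is there that principality is consumed. I would invoke that theorem to express $h^*(Y,G)$ through measures on a selfjoining whose marginals are governed by $X$; since $\pi$ is principal, these marginals carry no extra entropy over their projections to $X$, so the selfjoining quantity computing $h^*(Y,G)$ is bounded by the corresponding quantity for $X$, giving $h^*(Y,G)\le h^*(X,G)$.

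Combining the two inequalities yields $h^*(Y,G)=h^*(X,G)$, as required. I expect the bookkeeping around the selfjoining and the precise form in which \cite[Theorem 3.1]{ZhouZhangChen} is applied to be where all the genuine work lies, while the two estimates relating covers on $Y$ to covers on $X$ and the use of principality are conceptually straightforward once that machinery is in place.
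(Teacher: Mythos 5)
Your decomposition into two inequalities founders already at the step you call easy. The identity $h_G(\pi^{-1}\mathcal{U}\,|\,\pi^{-1}\mathcal{W})=h_G(\mathcal{U}|\mathcal{W})$ is correct (by equivariance $(\pi^{-1}\mathcal{U})^F=\pi^{-1}(\mathcal{U}^F)$, and counting is preserved under lifting through a surjection), but the final step ``enlarging the family of covers only helps'' is wrong: enlarging the family helps inside the \emph{supremum} and hurts inside the \emph{infimum}. What your argument actually yields is
$$
h^*(X,G)\ \le\ \inf_{\mathcal{W}\in\mathfrak{C}^o_X}\ \sup_{\mathcal{V}\in\mathfrak{C}^o_Y}\ h_G\bigl(\mathcal{V}\,|\,\pi^{-1}\mathcal{W}\bigr),
$$
and this middle quantity \emph{dominates} $h^*(Y,G)$, because in $h^*(Y,G)$ the infimum runs over all open covers of $Y$, not merely the lifted ones; so the chain of inequalities points the wrong way and nothing follows. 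Worse, the inequality $h^*(X,G)\le h^*(Y,G)$ is simply false for general factor maps, so no argument ignoring principality can establish it: take a $\mathbb{Z}$-action $X$ with $h^*(X,\mathbb{Z})>0$ that admits a (necessarily non-principal) symbolic extension $Y$ --- such systems exist by the symbolic extension theory of \cite{BD} --- then $Y$ is expansive, hence $h^*(Y,\mathbb{Z})=0<h^*(X,\mathbb{Z})$. Indeed, if your ``easy'' inequality held for all factor maps, every system admitting a symbolic extension would be asymptotically $h$-expansive, contradicting Theorem \ref{main result}. The moral is that conditioning covers on $Y$ which do not come from $X$ (e.g.\ the zero-coordinate partition of a subshift) can genuinely lower tail entropy, so principality must be consumed in \emph{both} directions.

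Your hard direction names the right tools --- the relative variational principle \cite[Theorem 13.3]{DZ} and the selfjoining characterization \cite[Theorem 3.1]{ZhouZhangChen} --- but it remains a plan rather than a proof, and the actual mechanism differs from what you sketch: the paper never splits the statement into two inequalities. It applies \cite[Theorem 3.1]{ZhouZhangChen} to express $h^*(X,G)$ as the maximum over $\mu\in\mathcal{M}(X\times X,G)$ of the defect of upper semicontinuity of $\mu\mapsto h_\mu(X\times X,G|X)$ (the \emph{full} product with first-coordinate projection, not a fibered product $Y\times_X Y$), and likewise for $Y$. Then the relative variational principle gives zero topological relative entropy for $\pi$; product covers show the same for $\pi\times\pi$; applying the variational principle ``backwards'' shows $\pi\times\pi$ is principal. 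In finite entropy this makes $\nu\mapsto h_\nu(Y\times Y,G|Y)$ exactly the lift of $\mu\mapsto h_\mu(X\times X,G|X)$ along $(\pi\times\pi)_*$, and since taking upper semicontinuous envelopes (hence defects, hence maxima) commutes with lifting along a continuous surjection, the two maxima --- that is, the two tail entropies --- coincide. This equality of defect functions is the single step that delivers both of your inequalities at once; without it, your appeal to ``principality applied fiberwise'' has no precise content to attach to.
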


\begin{proof}
Since $\pi$ is a principal extension, it preserves \tl\ entropy. As we have already remarked,
infinite \tl\ entropy implies infinite \tl\ tail entropy for both actions. It remains to continue assuming that $\htop(X,G)<\infty$. 

Consider the product action $(X\times X, G)$ (given by $g(x_1,x_2)=(g(x_1),g(x_2))$), and its projection to the first coordinate $\pi_1: X\times X\rightarrow X$ (which is a topological factor map). Then by \cite[Theorem 3.1]{ZhouZhangChen} one has
\begin{equation} \label{tce-X}
h^*(X, G)= \max\Bigl\{\Bigr(\tilde h_\mu(X\times X,G|X)-h_\mu(X\times X,G|X)\Bigr): \mu\in\M(X\times X,G)\Bigr\},
\end{equation}
and an analogous formula holds for $(Y\times Y,G)$ factoring onto $(Y,G)$. For a real-valued function $f$ on a compact space we call the difference $\tilde f - f$ \emph{the defect function} (meaning the defect of upper semi-continuity). So, in the large round parentheses above we see the defect function of the function $\mu\mapsto h_\mu(X\times X,G|X)$ on $\M(X\times X,G)$. We need to show that these defect functions on $X\times X$ and $Y\times Y$ have the same maxima. Since $\pi:Y\to X$ is a principal extension, the variational principle for \tl\ relative entropy of a topological factor map between topological $G$-actions implies that the \tl\ relative entropy associated with the \tl\ factor map $\pi:Y\to X$ equals zero. Further, the \tl\ relative entropy associated with the \tl\ factor map $\pi\times\pi:Y\times Y\to X\times X$ is at most the sum of two zeros (this can be readily seen as it suffices to consider product covers of $Y\times Y$), so it equals zero as well. Applying the same relative variational principle ``backwards'', we deduce that $\pi\times\pi$ is a principal extension\footnote{It is probably possible to prove this fact directly using only measure-theoretic entropy, but we failed to find a short argument of this kind.}. This implies that the function $\nu\mapsto h_\nu(Y\times Y,G|Y)$ on $\M(Y\times Y,G)$ equals the function $\mu\mapsto h_\mu(X\times X,G|X)$ on $\M(X\times X,G)$ lifted against (i.e.\ composed with) $\pi\times\pi$. It takes an elementary exercise in topology to see that in the context of a continuous surjection between two compact metric spaces, the operation of taking the \usc\ envelope of a function commutes with operation of lifting. The same is obviously true for the defect operation, and this observation ends the proof.
\end{proof}	
	
\section{Proofs of the main theorems}
Now we are ready to prove our main Theorems \ref{tail} and \ref{main result}. For the  reader's convenience we state them again before each proof.

\begin{customthm}{2.1}
Let $(X,G)$ be a topological action of a countable amenable group, with finite entropy. Then
$$
h^* (X,G)= \max \{u_1 (\mu): \mu\in \M(X,G)\}= \lim_{k\rightarrow \infty} 
\ \sup\{\theta_k (\mu):\mu\in\M(X,G)\}.
$$
\end{customthm}

\begin{proof}
As we have already mentioned, the second equality is just swapping the limit of a decreasing \sq\ of upper semicontinuous functions with the maximum over the domain, see e.g.\ \cite[Proposition 2.4 ]{BD} and then applying the obvious fact that once the maximum is inside, we can skip the \usc\ envelope marks (and replace maximum by supremum). It remains to prove that
\begin{equation} \label{xxxx}
	h^* (X,G)= \lim_{k\rightarrow \infty}\ \sup\{\theta_k (\mu):\mu\in\M(X,G)\}.
\end{equation}

At first we assume that $X$ is \zd, and that the entropy structure of $(X,G)$, $\H=(h_k)_{k\ge 0}$, is determined by a refining \sq\ $(\P_k)_{k\in\N}$ of finite clopen partitions. In this case it is easy to see that both the supremum and infimum in the definition of $h^*(G,X)$ are realized along the \sq\ $\P_k$ (viewed as open covers), i.e.\ we can write
\begin{equation} \label{bridge}
h^* (X,G)= \inf_{k\in \mathbb{N}}\ \sup_{m\in \mathbb{N}} h_G (\P_m| \P_k).
\end{equation}
Then, by Theorem \ref{201505172121}, we obtain 
\begin{multline*}
h^* (X,G)= \inf_{k\in\N}\ \sup_{m\in\N}\ \max\{h_m(\mu)-h_k(\mu):\mu\in\M (X,G)\}\\
=\inf_{k\in\N}\ \max\left\{\sup_{m\in\N}(h_m(\mu)-h_k(\mu)):\mu\in\M (X,G)\right\}=
\inf_{k\in\N}\ \max\{\theta_k(\mu):\mu\in\M (X,G)\},
\end{multline*}
and we are done.
	  
Now consider a general topological $G$-action $(X,G)$.  By \cite[Theorem 3.2]{H}, this system has a principal \zd\ extension $(X',G)$. It is fairly obvious that the quantity 
$$
\lim_{k\rightarrow \infty}\ \sup\{\theta_k (\mu):\mu\in\M(X,G)\}
$$
does not depend on the choice of an individual entropy structure within the uniform equivalence class. This implies that the above quantity for $(X,G)$ is the same as for $(X',G)$. On the other hand, by Proposition \ref{principal}, we also have $h^*(X,G)=h^*(X',G)$. Since we have already proved the desired  equality for \zd\ actions, this equality passes to $(X,G)$, and the proof is finished.
\end{proof}

\begin{customthm}{2.2}
Let $(X,G)$ be a \tl\ action of a countable amenable group. Then the following conditions are equivalent:
 \begin{enumerate}
 \item $(X,G)$ is \ahe.
 \item The entropy structure of $(X,G)$ converges uniformly to the entropy function.
% \item The entropy function is the minimal superenvelope of the entropy % structure.
 \item $(X, G)$ admits a principal quasi-symbolic extension.
 \item For any $\eps>0$ the action admits a quasi-symbolic extension with \tl\ relative 
 entropy at most $\eps$.
 \end{enumerate}
Furthermore, if $G$ is either residually finite or enjoys the comparison property then the quasi-symbolic extensions in the above statements can be replaced by symbolic extensions.
\end{customthm}

\begin{proof}
The equivalence of $(1)\Longleftrightarrow (2)$ follows from the tail variational principle Theorem \ref{tail} and the obvious fact that the entropy structure converges uniformly to the entropy function if and only if the \sq\ $(\theta_k)_{k\in\N}$ converges uniformly to zero. The equivalence of $(2)\Longleftrightarrow (3)$ is just \cite[Theorem 5.6]{DownarowiczZhang}\footnote{We remark that in \cite{DownarowiczZhang} asymptotic $h$-expansiveness of a topological $G$-action was defined as uniform convergence of the entropy structure.}. The equivalence of $(3)\Longleftrightarrow (4)$ follows from the symbolic extension theory for $G$-actions \cite{DownarowiczZhang} (roughly speaking, if the infimum of the entropy functions in all symbolic extensions is affine, then it is attained). 
Once the above equivalences are proved, the final statement follows directly from \cite[Theorems 7.10 and 7.11]{DownarowiczZhang}.
\end{proof}
	
\section{Final comments and an example}\label{8}

In \cite{Zhou}, the author attempts to prove the tail variational principle directly (with the harder inequality valid only for essential partitions; as we already know, this would suffice for the general case as well). The problem occurs in the measure-theoretic considerations leading to the construction of an \im\ realizing the supremum appearing in the theorem. On page 1528 the author writes:
$$
\frac1{|F|}H_{\mu_F}(\P_l^F|\P_k^F)\le \frac1{|E|}H_{\nu_F}(\P_l^E|\P_k^E)+ \text{ small error term},
$$
where $F$ and $E$ are finite subsets of $G$ and $F$ is $(E,\delta)$-invariant for a small $\delta$. Unfortunately, the presence of the crucial coefficient $\frac1{|E|}$ does not follow in any way from the preceding calculations. Note, that the inequality resembles  Shearer's inequality\footnote{
A function $H$ on $\GF_G$ satisfies Shearer's inequality (adapted to our context) if whenever $F=\bigcup_{i=1}^n E_i$ and each element of $F$ is contained in at least $k$ of the sets $E_i$, then
$$
H(F)\le\frac1k\sum_{i=1}^nH(E_i).
$$
}.
In spite of the above mistake, this inequality would hold if the function $F\mapsto H_\mu(\P_l^F|\P_k^F)$ was {\bf strongly} subadditive on $\mathfrak F_G$. But, as we show in Example \ref{example} below, this is simply not true. Nonetheless, it is still possible that Shearer's inequality holds despite the lack of strong subadditivity (there are examples of this kind, see e.g.\ \cite[Example 6.4]{DFR}). In fact, the problem whether conditional Shannon entropy satisfies Shearer's inequality seems to be open (we failed to prove it or disprove it). But it is certain, that this inequality cannot be proved using plain subadditivity, as it is attempted in \cite{Zhou}, because Shearer's inequality is {\bf strictly stronger} than plain subadditivity (see e.g.\ \cite[Example 6.5]{DFR}). 

\begin{remark}
Example \ref{example} uses the finite group $\Z_3$. An example with an infinite group is easily obtained by adapting Example \ref{example} to $G=\Z_3\times\Z$ with an action that does not depend on the second coordinate of $G$. 
\end{remark}
\begin{exam}\label{example}
Conditional Shannon entropy is not strongly subadditive. The same example shows also that conditional counting entropy between covers is not strongly subadditive even when the considered covers are finite clopen partitions.
\end{exam}
\noindent And so, let $G= \Z_3$ and let $(X,G)$ be the full shift on two symbols, i.e.\ $X=\{a,b\}^{\Z_3}$ with the shift action. This action is generated by one map $\sigma$
defined by 
$$
\sigma((x_n)_{n\in\Z_3}) = (x_{(n+1\!\!\!\!\!\mod 3)})_{n\in\Z_3}.
$$ 
Let $\mu$ be the $(\frac12,\frac12)$-Bernoulli measure and let $\P$ denote the ``zero-coordinate'' partition $\P=\{[a],[b]\}$, where
$$
[a]=\{(x_n)_{n\in\Z_3}\in X:x_0=a\}
$$
and $[b]$ is defined analogously (or as the complement of $[a]$).
We also define 
$$
\Q=\P^{\{0,1\}}=\P\vee\sigma^{-1}(\P),\ \  \R=\P^{\{1,2\}}=\sigma^{-1}(\P)\vee\sigma^{-2}(\P).
$$
Consider the following three finite subsets of $\Z$:
$$
D=\{0\}, \ \ E=\{1\}, \ \ F=\{2\}. 
$$
Clearly,
$$
\Q^E= \P^{E\cup F}\ \text{and}\ \R^E= \P^{F\cup D},
$$ 
and
$$
\Q^{D\cup E\cup F}=\R^{D\cup E\cup F}=\Q^{D\cup E}=\R^{D\cup E}=\Q^{E\cup F}=\R^{E\cup F}= \P^{D\cup E\cup F}.
$$
Below we will just write $H(\cdot|\cdot)$ but the same applies to $H_\mu(\cdot|\cdot)$.

By a straightforward verification, we have:
\begin{gather*}
H(\Q^E|\R^E)=\log 2>0,\\
H(\Q^{D\cup E\cup F}|\R^{D\cup E\cup F})= H(\Q^{D\cup E}|\R^{D\cup E})= H(\Q^{E\cup F}| \R^{E\cup F})=0.
\end{gather*}
In particular, strong subadditivity fails, because of the strict inequality:
$$
H(\Q^{D\cup E\cup F}|\R^{D\cup E\cup F})\ \ >\ \ H(\Q^{D\cup E}|\R^{D\cup E})+ H(\Q^{E\cup F}|\R^{E\cup F}) - H(\Q^E|\R^E),
$$	
while $D\!\cup\!E\!\cup\!F$ and $E$ are respectively the union and intersection of the sets $D\!\cup\!E$ and $E\!\cup\!F$.

\section*{Acknowledgements}

The research of the first author is supported by the National Science Center, Poland (Grant
HARMONIA No.\ 2018/30/M/ST1/00061) and by the Wroc\l aw University of Science and
Technology.
The second author is supported by the National Natural Science Foundation of China (Grant No.\ 11731003).

\bibliographystyle{amsplain} % global bibliography

%\bibliography{DownZh}

%\iffalse

%\fi

\printindex

\end{document}